\def\dd{\,\mathrm{d}}
\newtheorem{corol}{Corollary}
\newtheorem{remark}{Remark}
\def \CC {{\mathbb{C}}}
\def \RR {{\mathbb{R}}}
\def \P {\hat{P}}
\def \pmatrix{ \left( \begin{array} }
\def \endpmatrix{ \end{array} \right) }
\def\aa{\alpha}
\def\bb{\beta}
\def\i{\mathrm{i}}
\begin{document}

\title{A unifying framework for the derivation and analysis of effective
classes of one-step methods for ODEs\,\thanks{Work developed
within the project ``Numerical methods and software for
differential equations''.}}

\author{Luigi Brugnano\thanks{Dipartimento di Matematica
``U.\,Dini'', Universit\`a di Firenze, Italy ({\tt
luigi.brugnano@unifi.it}).} \and Felice
Iavernaro\thanks{Dipartimento di Matematica, Universit\`a di Bari,
Italy ({\tt felix@dm.uniba.it}).} \and Donato
Trigiante\thanks{Dipartimento di Energetica ``S.\,Stecco'', Universit\`a di
Firenze, Italy ({\tt trigiant@unifi.it}).}}

\maketitle

\begin{abstract} In this paper, we provide a simple
framework to derive and analyse several classes of effective
one-step methods. The framework consists in the discretization of
a local Fourier expansion of the continuous problem. Different
choices of the basis lead to different classes of methods, even
though we shall here consider only the case of an orthonormal
polynomial basis, from which a large subclass of Runge-Kutta methods
can be derived. The obtained results are then applied to prove, in a
simplified way, the order and stability properties of Hamiltonian BVMs
(HBVMs), a recently introduced class of energy preserving methods
for canonical Hamiltonian systems (see \cite{BIT0} and references
therein). A few numerical tests with such methods are also
included, in order to confirm the effectiveness of the methods.
\end{abstract}

\begin{keywords} Ordinary differential equations,
Runge-Kutta methods, one-step methods, Hamiltonian problems,
Hamiltonian Boundary Value Methods, energy preserving methods,
symplectic methods, energy drift. \end{keywords}

\begin{AMS} 65L05, 65P10.\end{AMS}

\section{Introduction}
\begin{flushright}\vspace{-1em}
  \textsf{\em Though I have not always been able to make simple}\\
  \textsf{\em a difficult thing, I never made difficult a simple one.}\\
  \textsf{F.\,G.\,Tricomi}\hspace{5.75cm}~\\
\end{flushright}

One-step methods are widely used in the numerical solution of
initial value problems for ordinary differential equations which,
without loss of generality, we shall assume to be in the form:
\begin{equation}\label{ivp}
y'(t) = f(y(t)), \qquad t\in[t_0,t_0+T],\qquad y(t_0)=y_0\in\RR^m.
\end{equation}
In particular, we consider a very general class of effective
one-step methods that can be led back to a local Fourier expansion
of the continuous problem over the interval $[t_0,t_0+h]$, where $h$
is the considered stepsize. In general, different choices of the
basis result in different classes of methods, for which, however,
the analysis turns out to be remarkably simple. Though the arguments
can be extended to a general choice of the basis, we consider here
only the case of a polynomial basis, from which one obtains a large
subclass of Runge-Kutta methods. Usually, the order properties of
such methods are studied through the classical theory of Butcher on
rooted trees (see, e.g., \cite[Th.\,2.13 on p.\,153]{HNW}), almost
always resorting to the so called {\em simplifying assumptions}
(see, e.g., \cite[Th.\,7.4 on p.\,208]{HNW}). Nonetheless, such
analysis turns out to be greatly simplified for the methods derived
in the new framework, which is introduced in
Section~\ref{fouriersec}. Similar arguments apply to the linear
stability analysis of the methods, here easily discussed through the
Lyapunov method. Then, we apply the same procedure to the case where
(\ref{ivp}) is a canonical Hamiltonian problem, i.e., a problem in
the form
\begin{equation}\label{ham}
  \frac{dy}{dt}= J \nabla H(y), \qquad
  J=\pmatrix{cc}0&I_m\\-I_m&0\endpmatrix, \qquad y(t_0)=y_0\in\RR^{2m},
\end{equation}
where $H(y)$ is a smooth scalar function, thus obtaining, in
Section~\ref{mfe}, an alternative derivation of the recently
introduced class of energy preserving methods called Hamiltonian
BVMs (HBVMs, see \cite{BIT0,BIT1,BIT2} and references therein). A
few numerical examples concerning such methods are then provided
in Section~\ref{numtest}, in order to make evident their
potentialities. Some concluding remarks are then given in
Section~\ref{final}.

\section{Local Fourier expansion of ODEs}\label{fouriersec}
Let us consider problem (\ref{ivp}) restricted to the interval
$[t_0,t_0+h]$:
\begin{equation}\label{localivp}
y' = f(y), \qquad t\in[t_0,t_0+h], \qquad y(t_0)=y_0.
\end{equation}
In order to make the arguments as simple as possible, we shall
hereafter assume $f$ to be analytical. Then, let us fix an
orthonormal basis $\{\P_j\}_{j=0}^\infty$ over the interval $[0,1]$,
even though different bases and/or reference intervals could be in
principle considered. In particular, hereafter we shall consider a
polynomial basis: i.e., the shifted Legendre polynomials over the
interval $[0,1]$, scaled in order to be orthonormal. Consequently,
$$\int_0^1 \P_i(x)\P_j(x) \dd x = \delta_{ij}, \qquad \deg\P_j = j,
\qquad \forall i,j\ge0,$$ where $\delta_{ij}$ is the Kronecker
symbol. We can then rewrite (\ref{localivp}) by expanding the
right-hand side:
\begin{equation}
y'(t_0+ch) = \sum_{j=0}^\infty \P_j(c) \gamma_j(y), \quad c\in[0,1];
\label{localivpexp} \qquad \gamma_j(y) = \int_0^1 \P_j(\tau)
f(y(t_0+\tau h)) \dd\tau.
\end{equation}
The basic idea (first sketched in \cite{BIT5}) is now that of
truncating the series iafter $r$ terms, which turns
(\ref{localivpexp}) into
\begin{equation}\label{localivpexpr}
\omega'(t_0+ch) = \sum_{j=0}^{r-1} \P_j(c) \gamma_j(\omega), \quad
c\in[0,1]; \qquad \gamma_j(\omega) = \int_0^1 \P_j(\tau)
f(w(t_0+\tau h)) \dd\tau.
\end{equation}
By imposing the initial condition, one then obtains
\begin{equation}\label{solr}
\omega(t_0+ch) = y_0+h\sum_{j=0}^{r-1} \gamma_j(\omega) \int_0^c \P_j(x)
 \dd x, \qquad c\in[0,1].
\end{equation}
Obviously, $\omega$ is a polynomial of degree at most $r$. The
following question then naturally arises: ~{\em ``how close are
$y(t_0+h)$ and $\omega(t_0+h)$?''}~ The answer is readily obtained, by using the following preliminary result.

\begin{lemma}\label{lem1}
Let $g:[0,h]\rightarrow \RR^m$ be a suitably regular function.
Then $\int_0^1 \P_j(\tau)g(\tau h) \dd \tau = O(h^j).$
\end{lemma}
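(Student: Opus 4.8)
The plan is to expand $g$ in a Taylor series about $0$ and exploit the orthogonality of the shifted Legendre polynomials $\P_j$ against polynomials of degree less than $j$. Writing $g(\tau h) = \sum_{k=0}^{j-1} \frac{(\tau h)^k}{k!} g^{(k)}(0) + R_j(\tau h)$, where the remainder $R_j(\tau h)$ is $O(h^j)$ uniformly for $\tau\in[0,1]$ (here the regularity assumption on $g$ is used), I would substitute into the integral and split it into two pieces.

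For the polynomial part, each term $\int_0^1 \P_j(\tau)\,\tau^k\dd\tau$ vanishes for $k=0,1,\dots,j-1$, since $\tau^k$ is then a polynomial of degree strictly less than $j=\deg\P_j$, and $\P_j$ is orthogonal on $[0,1]$ to all such polynomials. Hence the entire polynomial part of the Taylor expansion contributes exactly zero, and only the remainder survives. For the remainder part, $\left|\int_0^1 \P_j(\tau) R_j(\tau h)\dd\tau\right| \le \left(\int_0^1 |\P_j(\tau)|\dd\tau\right)\cdot \max_{\tau\in[0,1]}|R_j(\tau h)| = O(h^j)$, since the first factor is a fixed constant depending only on $j$. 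Combining the two pieces gives the claim.

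I expect the only real point requiring care to be the justification that the Taylor remainder is genuinely $O(h^j)$ with a constant uniform in $\tau$; this is immediate from Taylor's theorem with, say, the Lagrange or integral form of the remainder together with the phrase ``suitably regular'' in the hypothesis (it suffices that $g\in C^j[0,h]$, and the analyticity of $f$ assumed earlier makes this automatic in all the applications). Everything else is a direct consequence of the degree and orthogonality properties $\int_0^1 \P_i\P_j\dd x = \delta_{ij}$, $\deg\P_j = j$ recorded above; no rooted-tree machinery or simplifying assumptions are needed. An alternative, essentially equivalent route is to integrate by parts $j$ times using the fact that the $j$-fold antiderivative of $\P_j$ vanishes to order $j$ at both endpoints (a Rodrigues-type property of Legendre polynomials), which also exhibits the $h^j$ factor directly; I would mention this but carry out the Taylor-expansion argument as the cleaner one.
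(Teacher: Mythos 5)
Your argument is correct and follows essentially the same route as the paper: Taylor expansion of $g$ about $0$ combined with the orthogonality of $\P_j$ to polynomials of degree less than $j$. In fact, by using a finite Taylor polynomial with an explicit uniform bound on the remainder, you are slightly more careful than the paper's proof, which expands $g$ in a full series and reads off the $O(h^j)$ behaviour from the first surviving term.
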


\begin{proof} Assume, for sake of simplicity, $$g(\tau h) =
\sum_{n=0}^\infty \frac{g^{(n)}(0)}{n!} (\tau h)^n$$ to be the
Taylor expansion of $g$. Then, for all $j\ge0$,
$$\int_0^1 \P_j(\tau) g(\tau h) \dd\tau = \sum_{n=0}^\infty
\frac{g^{(n)}(0)}{n!} h^n \int_0^1 \P_j(\tau) \tau^n\dd\tau =
O(h^j),$$ since $\P_j$ is orthogonal to polynomials of degree
$n<j$.~\end{proof}

As a consequence, one has that (see (\ref{localivpexpr})) $\gamma_j(\omega)=O(h^j)$. Moreover, for any given $\tilde t\in[t_0,t_0+h]$, we denote by
$y(s,\tilde t,\tilde y)$ the solution of
\eqref{localivp}-\eqref{localivpexp} at time $s$ and with initial
condition $y(\tilde t)=\tilde y$. Similarly, we denote by\vspace{-.75em}
\begin{equation}\label{lem2}\Phi(s,\tilde t,\tilde
y)=\frac{\partial}{\partial \tilde y}y(s,\tilde t,\tilde
y),
\end{equation}
also recalling the following standard result from the theory of
ODEs:
\begin{equation}\label{lem3}
\frac{\partial}{\partial \tilde t} y(s,\tilde t, \tilde y) =
-\Phi(s,\tilde t, \tilde y)f(\tilde y).
\end{equation}

We can now state the following result, for which we provide a more direct proof,
with respect to that given in \cite{BIT5}. Such proof is essentially
based on that of \cite[Theorem\,6.5.1 on
pp.\,165-166]{LakTri}.

\begin{theorem}\label{fourierord}
Let $y(t_0+ch)$ and $\omega(t_0+ch)$, $c\in[0,1]$, be the
solutions of (\ref{localivpexp}) and (\ref{localivpexpr}),
respectively. Then, ~$y(t_0+h)-\omega(t_0+h) =
O(h^{2r+1}).$
\end{theorem}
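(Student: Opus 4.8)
The plan is to estimate $\omega(t_0+h)-y(t_0+h)$ by the nonlinear variation-of-constants (Alekseev--Gr\"obner) technique, following \cite[Theorem\,6.5.1]{LakTri}. First I would introduce the auxiliary function
$$\psi(c) = y(t_0+h,\,t_0+ch,\,\omega(t_0+ch)), \qquad c\in[0,1],$$
obtained by taking the polynomial approximation $\omega$ at time $t_0+ch$ and transporting it with the \emph{exact} flow of (\ref{localivp}) up to the final time $t_0+h$. For $h$ small enough $\omega(t_0+ch)$ stays in a fixed neighbourhood of $y_0$, so $\psi$ is well defined and as regular as $f$; moreover $\psi(0)=y(t_0+h)$ since $\omega(t_0)=y_0$, while $\psi(1)=\omega(t_0+h)$, so that $\omega(t_0+h)-y(t_0+h)=\int_0^1\psi'(c)\dd c$.

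Next I would compute $\psi'(c)$ by the chain rule, using (\ref{lem2}) and the standard identity (\ref{lem3}): differentiation with respect to the third argument contributes $h\,\Phi(t_0+h,t_0+ch,\omega(t_0+ch))\,\omega'(t_0+ch)$, differentiation with respect to the second argument contributes $-h\,\Phi(t_0+h,t_0+ch,\omega(t_0+ch))\,f(\omega(t_0+ch))$, whence
$$\psi'(c) = h\,\Phi(t_0+h,t_0+ch,\omega(t_0+ch))\,\bigl[\omega'(t_0+ch)-f(\omega(t_0+ch))\bigr].$$
The bracket is the defect of the truncated problem (\ref{localivpexpr}) with respect to the full one (\ref{localivpexp}): since the $\gamma_j(\omega)$ appearing in (\ref{localivpexpr}) are exactly the Fourier coefficients of $c\mapsto f(\omega(t_0+ch))$, so that $f(\omega(t_0+ch))=\sum_{j\ge0}\P_j(c)\gamma_j(\omega)$, the bracket equals $-\sum_{j\ge r}\P_j(c)\gamma_j(\omega)$. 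Integrating over $c\in[0,1]$ and exchanging sum and integral (legitimate by analyticity) then gives
$$\omega(t_0+h)-y(t_0+h) = -\,h\sum_{j\ge r}\gamma_j(\omega)\int_0^1\P_j(c)\,\Phi(t_0+h,t_0+ch,\omega(t_0+ch))\dd c.$$

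Finally I would bound the two factors by Lemma~\ref{lem1}. On the one hand, $\gamma_j(\omega)=O(h^j)$, as already observed. On the other hand, setting $x=ch$ and applying Lemma~\ref{lem1} to the map $x\mapsto\Phi(t_0+h,t_0+x,\omega(t_0+x))$ (which is regular uniformly in $h$ for $h$ small) yields $\int_0^1\P_j(c)\,\Phi(t_0+h,t_0+ch,\omega(t_0+ch))\dd c=O(h^j)$ as well. Hence the $j$-th term of the series is $O(h^{2j+1})$ with $j\ge r$, so the whole sum is $O(h^{2r+1})$, which is the claim. I expect this last estimate to be the only delicate point: a naive Taylor expansion of $\Phi(t_0+h,t_0+ch,\omega(t_0+ch))$ in $c$ about $c=0$ yields merely $O(h)$, and one really needs the sharper $O(h^j)$ bound — equivalently, the fact that, once expanded in powers of $h$, this matrix has an $h^m$-coefficient that is a polynomial in $c$ of degree at most $m$ — in order to reach the superconvergence exponent $2r+1$; uniformity (in $c$ and in $j$) of all the estimates follows from the analyticity of $f$ and the boundedness of $\omega$ and $\Phi$ for $h$ small.
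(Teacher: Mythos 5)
Your proposal is correct and follows essentially the same route as the paper's proof: you transport $\omega(t_0+ch)$ along the exact flow (the Alekseev--Gr\"obner idea of \cite[Theorem\,6.5.1]{LakTri}), identify the defect $\omega'-f(\omega)=-\sum_{j\ge r}\P_j\gamma_j(\omega)$, and apply Lemma~\ref{lem1} twice, to $\gamma_j(\omega)$ and to the integral involving $\Phi$, obtaining $O(h^{2j+1})$ per term and hence $O(h^{2r+1})$. The only difference is cosmetic (you parametrize by $c\in[0,1]$ from the start and are slightly more explicit about uniformity of the estimates in $j$, which the paper leaves implicit when summing the series).
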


\begin{proof} By virtue of Lemma~\ref{lem1} and
\eqref{lem2}-\eqref{lem3}, one has:
\begin{eqnarray*}\lefteqn{y(t_0+h)-\omega(t_0+h) ~=~ y(t_0+h,
t_0, y_0) - y(t_0+h, t_0+h, \omega(t_0+h))}\\
&=& \int_{t_0}^{t_0+h} \frac{d}{d\tau} y(t_0+h, \tau,
\omega(\tau))\dd\tau
~=~\int_{t_0}^{t_0+h} \left(\frac{\partial}{\partial
\tau}y(t_0+h,\tau,\omega(\tau))+\frac{\partial}{\partial
\omega}y(t_0+h,\tau,\omega(\tau))\omega'(\tau)\right) \dd \tau
\\
&=& h\int_0^1 \Phi(t_0+h,t_0+ch,\omega(t_0+ch))\left(
-f(\omega(t_0+ch))+\omega'(t_0+ch)\right)\dd c\\
&=& -h\int_0^1 \Phi(t_0+h,t_0+ch,\omega(t_0+ch))\left( \sum_{j=
r}^\infty \gamma_j(\omega)\P_j(c)\right)\dd c\\
&=& -h \sum_{j=r}^\infty \left( \int_0^1 \P_j(\tau) \Phi(t_0+h,t_0+c
h,\omega(t_0+c h)) \dd c \right) \gamma_j(\omega) ~=~
h\sum_{j=r}^\infty O(h^{j})\,O(h^{j}) ~=~
O(h^{2r+1}).\end{eqnarray*}
\end{proof}

The previous result reveals the
extent to which the polynomial $\omega(t)$, solution of
\eqref{localivpexpr}, approximates the solution $y(t)$ of the
original problem \eqref{localivp} on the time interval
$[t_0,t_0+h]$. Obviously, the value $\omega(t_0+h)$ may serve as the
initial condition for a new IVP in the form \eqref{localivpexpr}
approximating $y(t)$ on the time interval $[t_0+h,t_0+2h]$. In
general, setting $t_i=t_0+ih$, $i=0,1,\dots$, and assuming that an
approximation $\omega(t)$ is available on the interval
$[t_{i-2},t_{i-1}]$, one can extend the approximation to the
interval $[t_{i-1},t_i]$ by solving the IVP
\begin{equation}
\label{globalivpexpr} \omega'(t_{i-1}+ch) = \sum_{j=0}^{r-1} \P_j(c)
\int_0^1 \P_j(\tau) f(\omega(t_{i-1}+ch))\dd\tau,\quad c\in[0,1],
\end{equation}
the initial value $\omega(t_{i-1})$ having been computed at the
preceding step.  The approximation to $y(t)$ is thus extended on an
arbitrary interval $[t_0,t_0+Nh]$,  and the function $\omega(t)$ is
a continuous piecewise polynomial. As a direct consequence of
Theorem \ref{fourierord}, we obtain the following result.

\begin{corol} Let $T=Nh$, where $h>0$ and $N$ is an integer.
Then, the approximation to the solution of problem (\ref{ivp}) by
means of \eqref{globalivpexpr}  at the grid-points $t_i =
t_{i-1}+h$,  $i=1,\dots,N$, with $\omega(t_0)=y_0$, is $O(h^{2r})$
 accurate.
\end{corol}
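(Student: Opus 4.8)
The plan is to upgrade the one-step estimate of Theorem~\ref{fourierord} to a global bound by the classical error-accumulation argument for one-step methods. Let $e_i=y(t_i)-\omega(t_i)$ be the error at the grid point $t_i$, $i=0,\dots,N$, with $e_0=0$, where $\omega$ is the continuous piecewise polynomial produced by \eqref{globalivpexpr}. Using the notation $y(s,\tilde t,\tilde y)$ of the excerpt for the solution of \eqref{localivp} at time $s$ with $y(\tilde t)=\tilde y$, I would insert at the $i$-th step the exact solution issued from the previously computed value $\omega(t_{i-1})$, writing
\begin{eqnarray*}
e_i &=& \big(y(t_i,t_{i-1},y(t_{i-1}))-y(t_i,t_{i-1},\omega(t_{i-1}))\big)\\
&& {}+\big(y(t_i,t_{i-1},\omega(t_{i-1}))-\omega(t_i)\big),
\end{eqnarray*}
where the first bracket is the error \emph{propagated} from $t_{i-1}$ and the second is the \emph{local} error committed on $[t_{i-1},t_i]$.

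The local term is controlled by Theorem~\ref{fourierord}, applied on the generic subinterval $[t_{i-1},t_i]$ in place of $[t_0,t_0+h]$: since $\omega$ restricted to $[t_{i-1},t_i]$ is exactly the solution of a problem of type \eqref{localivpexpr} with initial value $\omega(t_{i-1})$, one obtains $\|y(t_i,t_{i-1},\omega(t_{i-1}))-\omega(t_i)\|\le Ch^{2r+1}$. For the propagated term, since $f$ is analytic and hence locally Lipschitz with some constant $L$ on a fixed compact neighbourhood of the exact trajectory, the standard Lipschitz dependence of an ODE solution on its initial data gives $\|y(t_i,t_{i-1},y(t_{i-1}))-y(t_i,t_{i-1},\omega(t_{i-1}))\|\le e^{Lh}\|e_{i-1}\|$. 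Combining the two bounds produces the recursion $\|e_i\|\le e^{Lh}\|e_{i-1}\|+Ch^{2r+1}$.

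Since $e_0=0$, unrolling this recursion (a discrete Gronwall inequality) and using $e^{Lh}-1\ge Lh$ together with $Nh=T$ yields
\begin{eqnarray*}
\|e_N\| &\le& Ch^{2r+1}\sum_{k=0}^{N-1}e^{kLh} ~=~ Ch^{2r+1}\,\frac{e^{NLh}-1}{e^{Lh}-1}\\
&\le& Ch^{2r+1}\,\frac{e^{LT}-1}{Lh} ~=~ \frac{C\,(e^{LT}-1)}{L}\,h^{2r},
\end{eqnarray*}
so that $\|y(t_i)-\omega(t_i)\|=O(h^{2r})$ at every grid point (with $T$ replaced by $t_i-t_0$ for intermediate indices). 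I expect the only genuinely delicate point to be the \emph{uniformity in $i$} of the constant $C$ in the local estimate: one must verify a priori that, for $h$ small enough, the numerical trajectory $\omega$ stays in the compact set on which $f$ and the derivatives entering Theorem~\ref{fourierord} are uniformly bounded. This is the customary boot-strapping step in convergence proofs for one-step methods, and it causes no real trouble here precisely because the local error is of the very high order $h^{2r+1}$.
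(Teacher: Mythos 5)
Your proof is correct and follows exactly the route the paper intends: the paper states this corollary as a ``direct consequence'' of Theorem~\ref{fourierord} without writing out the details, and what you supply is precisely the standard local-error-plus-propagation decomposition with a discrete Gronwall argument (including the appropriate caveat about uniformity of the constants along the numerical trajectory) that turns the $O(h^{2r+1})$ one-step estimate into the $O(h^{2r})$ global bound.
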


We now want to compare the asymptotic behavior of $\omega(t)$ and
$y(t)$ on the infinite length interval $[t_0,+\infty)$ in the case
where $f$ is linear or defines a canonical Hamiltonian problem. To
this end we introduce the the infinite sequence $\{\omega_i\} \equiv
\{\omega(t_i)\}$.

\begin{remark}
Though in general, the sequence $\{\omega_i\}$ cannot be formally
regarded as the outcome of a numerical method,  under special
situations, this can be the case. For example, when $f$ is a
polynomial, the integrals in \eqref{localivpexpr} may be explicitly
determined and the IVP in \eqref{localivpexpr} is evidently
equivalent to a nonlinear system having as unknowns the coefficients
of the polynomial $\omega$ expanded along a given basis (for
example, the polynomyal $\omega$ may be computed by means of the
method of undetermined coefficients). This issue, as well as details
about how to manage the integrals in the event that the integrands
do not admit an analytical primitive function in closed form,  will
be thoroughly faced in Section~\ref{mfe}.
\end{remark}

\subsection{Linear stability analysis}\label{stab}

For the linear stability analysis, problem (\ref{localivp}) becomes
the celebrated test equation
\begin{equation}\label{test}
y' = \lambda y, \qquad \Re(\lambda)\le 0.
\end{equation}
By setting
$$\lambda = \aa+\i\bb, \qquad y = x_1 +\i
x_2, \qquad x=(x_1,x_2)^T, \qquad A = \pmatrix{rr} \aa & -\bb\\
\bb &\aa\endpmatrix,
$$
with $\i$ the imaginary unit, problem (\ref{test}) can be rewritten
as
\begin{equation}\label{testx}
x' = A x, \qquad t\in[t_0,t_0+h], \qquad x(t_0) ~\mbox{given}.
\end{equation}
Consequently, the corresponding truncated problem
(\ref{localivpexpr}) becomes
\begin{equation}\label{testr}\omega'(t_0+c h) = A\sum_{j=0}^{r-1}
\P_j(c)\int_0^1 \P_j(\tau) \nabla V(\omega(t_0+\tau h)) \dd\tau,
\qquad c\in[0,1],
\end{equation}
where
\begin{equation}\label{liap} V(x) = \frac{1}2
x^Tx\end{equation} is a Lyapunov function for (\ref{testx}). From
(\ref{testr})-(\ref{liap}) one readily obtains
\begin{eqnarray*}\Delta V(\omega(t_0))
&=&V(\omega(t_0+h))-V(\omega(t_0))
~=~ h\int_0^1 \nabla V(\omega(t_0+\tau h))^T\omega'(t_0+\tau h)\dd\tau\\
&=& h\sum_{j=0}^{r-1} \left[ \int_0^1 \P_j(\tau) \nabla
V(\omega(t_0+\tau h)) \dd\tau\right]^T A \left[ \int_0^1 \P_j(\tau)
\nabla V(\omega(t_0+\tau h)) \dd\tau\right]
\\&=& \aa h\sum_{j=0}^{r-1} \left\| \int_0^1 \P_j(\tau) \omega(t_0+\tau h)
\dd\tau\right\|_2^2.
\end{eqnarray*}
Last equality follows by taking the symmetric part of $A$. We
observe that
$$\omega\ne0 \quad\Longrightarrow\quad \sum_{j=0}^{r-1}
\left\| \int_0^1 \P_j(\tau) \omega(t_0+\tau h) \dd\tau\right\|_2^2>
0,
$$
since, conversely, this would imply $\omega(t_0+c h) =
\rho\cdot \P_r(c)$ for a suitable $\rho\ne0$ and, therefore (from
(\ref{testr})), $\P_r'\equiv 0$~ which is clearly not true. Thus for
a generic $y_0\not =0$,
$$
\Delta V(\omega(t_0))<0 \Longleftrightarrow \Re(\lambda)<0 \quad
\mbox{and} \quad \Delta V(\omega(t_0))=0 \Longleftrightarrow
\Re(\lambda)=0.
$$
Again, the above computation can be extended to any interval
$[t_{i-1},t_i]$ and, from the discrete version of the Lyapunov
theorem  (see, e.g., \cite[Th.\,4.8.3 on p.\,108]{LakTri}) , we have
that the sequence $\omega_i$ tends to zero if and only if
$\Re(\lambda)<0$, while it remains bounded whenever
$\Re(\lambda)=0$, whatever is the stepsize $h>0$ used. The following
result is thus proved.
\begin{theorem}\label{teostab} The continuous solution $y(t)$ of
\eqref{test} and its discrete approximation $\omega_i$ have the same
stability properies, for any choice of the stepsize $h>0$.
\end{theorem}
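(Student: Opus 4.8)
The plan is to reduce the statement to a comparison of three mutually exclusive regimes---decay, boundedness, blow-up---for the continuous flow and for the discrete map, and then to read off the discrete regime directly from the Lyapunov identity already derived just above the statement. For the test equation $y'=\lambda y$ the continuous solution is $y(t)=e^{\lambda t}y_0$, so $\|y(t)\|$ decays to $0$ precisely when $\Re(\lambda)<0$ and stays (constantly) bounded precisely when $\Re(\lambda)=0$. On the discrete side, since (\ref{testx}) is linear, the polynomial $\omega$ produced on each subinterval $[t_{i-1},t_i]$ depends linearly on its starting value $\omega(t_{i-1})=\omega_{i-1}$; hence there is a fixed real $2\times2$ matrix $R=R(hA)$ with $\omega_i=R\,\omega_{i-1}$, and the claim becomes: $R^n\to0$ when $\Re(\lambda)<0$, while $\{R^n\}_{n\ge0}$ stays bounded when $\Re(\lambda)=0$.

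First I would invoke the identity obtained on the interval $[t_{i-1},t_i]$, namely $\Delta V(\omega_{i-1})=V(\omega_i)-V(\omega_{i-1})=\aa h\sum_{j=0}^{r-1}\bigl\|\int_0^1\P_j(\tau)\,\omega(t_{i-1}+\tau h)\dd\tau\bigr\|_2^2$, together with the fact (also established above) that the sum on the right is strictly positive whenever $\omega\not\equiv0$ on the subinterval, since otherwise $\omega$ would be a nonzero multiple of $\P_r$ and $\P_r'$ would vanish identically. Thus $V(x)=\tfrac12 x^Tx$ is a genuine Lyapunov function for the map $R$: strictly decreasing along every nontrivial orbit when $\aa=\Re(\lambda)<0$, exactly conserved when $\aa=0$ (the excluded case $\aa>0$ would make it strictly increasing). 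Crucially, the sign of $\Delta V$ is governed by $\mathrm{sign}(\aa)$ alone, for \emph{every} $h>0$, which is the origin of the unconditional character of the conclusion.

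It then remains to convert this into the stated stability dichotomy. One may quote the discrete Lyapunov theorem \cite[Th.\,4.8.3 on p.\,108]{LakTri} directly; alternatively, in this linear finite-dimensional setting the argument is elementary. If $\aa<0$ then $\|Rv\|_2<\|v\|_2$ for every $v\ne0$, so the continuous map $v\mapsto\|Rv\|_2$ attains a maximum $\rho<1$ on the compact unit sphere, whence $\|R^n\|_2\le\rho^n\to0$ and $\omega_i\to0$, matching $y(t)\to0$. If $\aa=0$ then $\|Rv\|_2=\|v\|_2$ for all $v$, so $R$ is orthogonal and $\{R^n\}$ is bounded, matching $\|y(t)\|=\|y_0\|$. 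In both cases the discrete behaviour coincides with the continuous one, for arbitrary stepsize $h>0$.

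The one point needing care---the main obstacle---is the passage from ``$V$ is nonincreasing and is constant only on the trivial orbit'' to ``$\omega_i\to0$'' (respectively, to the absence of a nontrivial Jordan block of $R$ on the unit circle when $\aa=0$); this is exactly the content of the discrete LaSalle/Lyapunov theorem, and in the linear case it is subsumed by the compactness argument above. Closely related, and logically prior, is the well-posedness of the recursion: one should check that the truncated problem (\ref{testr}) has a unique solution for every $h>0$, so that $R=R(hA)$ is actually defined. This reduces to the nonsingularity, for all $h>0$, of the linear system obtained by inserting the expansion (\ref{solr}) of $\omega$ into (\ref{testr}); since the corresponding system matrix is $I-h(C\otimes A)$ with $C$ the Legendre integration matrix and $A$ having eigenvalues $\aa\pm\i\bb$ with $\aa\le0$, its determinant stays away from zero for every $h>0$, and this nonsingularity is precisely what encodes the absence of any stepsize restriction. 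Beyond these observations the proof is only bookkeeping, the real work having been done by the computation preceding the theorem.
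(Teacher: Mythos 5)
Your argument is correct and runs on exactly the same engine as the paper's: the identity $\Delta V(\omega_{i-1})=\aa h\sum_{j=0}^{r-1}\bigl\|\int_0^1\P_j(\tau)\,\omega(t_{i-1}+\tau h)\dd\tau\bigr\|_2^2$ together with the strict--positivity observation (nonzero $\omega$ cannot be a multiple of $\P_r$), applied on each subinterval. Where you genuinely differ is only in the final step: the paper converts the sign of $\Delta V$ into the asymptotics of $\{\omega_i\}$ by citing the discrete Lyapunov theorem \cite[Th.\,4.8.3 on p.\,108]{LakTri}, whereas you exploit linearity, write $\omega_i=R\,\omega_{i-1}$ for a fixed matrix $R=R(hA)$, and conclude elementarily: when $\Re(\lambda)<0$ the strict decrease of $V$ gives $\|Rv\|_2<\|v\|_2$ for all $v\ne0$, hence $\|R\|_2<1$ by compactness of the unit sphere and $\omega_i\to0$; when $\Re(\lambda)=0$, $R$ is an isometry, hence orthogonal, and the orbit stays bounded. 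This buys a self-contained, citation-free finish, at the (harmless) price of being tied to the linear setting, which is all the test equation requires. One caveat: your aside on well-posedness is asserted rather than proved --- the nonsingularity of $I-h(C\otimes A)$ for every $h>0$ does not follow from $\Re(\lambda)\le0$ alone, but needs the additional, nontrivial fact that the eigenvalues of the truncated Legendre integration matrix $C$ lie in the open right half-plane; the paper tacitly assumes existence of $\omega$ and never addresses this, so it does not affect the comparison, but as written that sentence is not a proof.
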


\subsection{The Hamiltonian case}\label{inftyhbvms}

In the case where problem (\ref{ivp}) is Hamiltonian, i.e.,
(\ref{ham}), the approximation provided by the polynomial $\omega$
in  (\ref{localivpexpr})-(\ref{solr}) inherits a very important
property of the continuous problem, i.e., energy conservation.
Indeed, it is very well known that for the continuous solution one
has, by virtue of (\ref{ham}),
$$\frac{\dd}{\dd t} H(y(t)) = \nabla H(y(t))^Ty'(t) =
\nabla H(y(t))^TJ \nabla H(y(t)) = 0,
$$
due to the fact that matrix
$J$ is skew-symmetric. Consequently, $H(y(t))=H(y_0)$ for all $t$.
For the truncated Fourier problem, the following result holds true.

\begin{theorem}\quad
$H(\omega(t_0+h)) = H(\omega(t_0)) \equiv H(y_0).$
\end{theorem}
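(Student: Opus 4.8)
The plan is to repeat, almost verbatim, the Lyapunov-function computation of Subsection~\ref{stab}, with the quadratic $V$ replaced by the Hamiltonian $H$ and the matrix $A$ replaced by $J$. First I would specialize the truncated problem \eqref{localivpexpr} to the Hamiltonian vector field $f(y)=J\nabla H(y)$, so that
$$\omega'(t_0+ch) = \sum_{j=0}^{r-1}\P_j(c)\,\gamma_j(\omega), \qquad \gamma_j(\omega) = J\int_0^1\P_j(\tau)\,\nabla H(\omega(t_0+\tau h))\dd\tau =: J\eta_j,$$
where $\eta_j\in\RR^{2m}$ is the $j$-th Fourier coefficient of $\nabla H$ evaluated along $\omega$.

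Next I would express the energy increment over $[t_0,t_0+h]$ as a line integral along the polynomial $\omega$ and then expand $\omega'$ in the basis $\{\P_j\}$:
$$H(\omega(t_0+h))-H(\omega(t_0)) = h\int_0^1 \nabla H(\omega(t_0+ch))^T\,\omega'(t_0+ch)\dd c = h\sum_{j=0}^{r-1}\left(\int_0^1\P_j(c)\,\nabla H(\omega(t_0+ch))\dd c\right)^T\gamma_j(\omega).$$
The inner integral is exactly $\eta_j$ (only the name of the integration variable has changed), so the increment equals $h\sum_{j=0}^{r-1}\eta_j^T J\eta_j$, which vanishes term by term because $J$ is skew-symmetric. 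Since the same argument applies on every subinterval $[t_{i-1},t_i]$, one obtains $H(\omega(t_i))=H(y_0)$ for all $i$, which in particular gives the statement.

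There is essentially no technical obstacle; the one point to pin down is the reason why truncating the series does no harm. Because $\omega'$ belongs to the $r$-dimensional space spanned by $\P_0,\dots,\P_{r-1}$, in the pairing $\nabla H(\omega)^T\omega'$ only the $L^2$-projection of $\nabla H(\omega)$ onto that same subspace contributes, and that projection is reconstructed precisely from the coefficients $\eta_j$ which, paired against $J\eta_j$, are annihilated by the skew-symmetry of $J$. Thus the proof is structurally the continuous identity $\nabla H^T J\nabla H=0$ applied a finite number of times. I would also stress, in contrast to Theorem~\ref{teostab}, that no positivity or stability hypothesis enters: exact energy conservation holds for every $r\ge1$ and every stepsize $h>0$, as long as the integrals defining the $\gamma_j$ are computed exactly (the issue of approximating them being deferred to Section~\ref{mfe}).
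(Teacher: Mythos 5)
Your proof is correct and follows essentially the same route as the paper: write the energy increment as the line integral $h\int_0^1\nabla H(\omega)^T\omega'\,\dd\tau$, expand $\omega'$ in the truncated basis, and observe that each term reduces to $\eta_j^TJ\eta_j=0$ by the skew-symmetry of $J$ (the paper writes the same quantity as $\gamma_j(\omega)^TJ\gamma_j(\omega)$ via $J^TJ=I$, a purely cosmetic difference). No gap; the additional remarks about per-interval validity and exactness of the integrals are consistent with the paper's discussion.
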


\begin{proof} From (\ref{localivpexpr}), considering that $f(\omega) =
J\nabla H(\omega)$ and $J^TJ=I$, one obtains:
\begin{eqnarray*} \lefteqn{H(\omega(t_0+h))-H(y_0) =}\\ &=& h\int_0^1 \nabla
H(\omega(t_0+\tau h))^T \omega'(t_0+\tau h)\dd\tau
~=~ h\int_0^1 \nabla H(\omega(t_0+\tau h))^T \sum_{j=0}^{r-1}
\P_j(\tau) \gamma_j(\omega)\dd\tau\\
&=& h\sum_{j=0}^{r-1}\left(\int_0^1 \nabla H(\omega(t_0+\tau h))
\P_j(\tau)\dd\tau\right)^T \gamma_j(\omega) ~=~ h\sum_{j=0}^{r-1}
\gamma_j(\omega)^TJ\gamma_j(\omega) ~=~0,
\end{eqnarray*} since $J$ is
skew-symmetric.~\end{proof}

\section{Discretization}\label{mfe}

Clearly, the integrals in (\ref{localivpexpr}), if not directly
computable, need to be numerically approximated. This can be done by
introducing a quadrature formula based at $k\ge r$ abscissae
$\{c_i\}$, thus obtaining an approximation to (\ref{localivpexpr}):
\begin{equation}\label{localivpexprk}
u'(t_0+c h)=\sum_{j=0}^{r-1} \P_j(c)
 \, \sum_{\ell=1}^k b_\ell \P_j(c_\ell) f(u(t_0+c_\ell h)), \qquad
c\in[0,1].
\end{equation}
where the $\{b_\ell\}$ are the quadrature weights, and $u$ is the
resulting polynomial, of degree at most $r$, approximating $\omega$.
It can be obtained by solving a discrete problem in the form:
\begin{equation}\label{hbvmkr}
u'(t_0+c_ih)=\sum_{j=0}^{r-1} \P_j(c_i)
 \, \sum_{\ell=1}^k b_\ell \P_j(c_\ell) f(u(t_0+c_\ell h)), \qquad
i=1,\dots,k.
\end{equation}
 Let $q$ be the order of the formula, i.e., let it be exact
for polynomials of degree less than $q$ (we observe that $q\ge k\ge
r$). Clearly, since we assume $f$ to be analytical, choosing $k$
large enough, along with a suitable choice of the nodes $\{c_i\}$,
allows to approximate the given integral to any degree of accuracy,
even though, when using finite precision arithmetic, it suffices to
approximate it to machine precision. We observe that, since the
quadrature is exact for polynomials of degree $q-1$, then its
remainder depends on the $q$-th derivative of the integrand with
respect to $\tau$. Consequently, considering that
$\P_j^{(i)}(c)\equiv 0$, for $i>j$, one has
\begin{equation}\label{errquad} \Delta_j(h) ~\equiv~
\int_0^1 \P_j(\tau) f(u(t_0+\tau h)) \mathrm{d} \tau-
\sum_{\ell=1}^k b_\ell \P_j(c_\ell) f(u(t_0+c_\ell h)) ~=~
O(h^{q-j}),
\end{equation}
$j=0,\dots,r-1$.~ Thus, (\ref{localivpexprk}) is equivalent to the
ODE,
\begin{equation}\label{omegacprimemod}
 u'(t_0+ch)= \sum_{j=0}^{r-1} \P_j(c)\left(\gamma_j(u)
-\Delta_j(h)\right),\quad c\in[0,1], \quad  \gamma_j(u)=\int_0^1 \P_j(\tau) f(u(t_0+\tau h)) \mathrm{d}
\tau,
\end{equation}
with $u(t_0)=y_0$, in place of (\ref{localivpexpr}). The
following result then holds true.

\begin{theorem} In the above hypotheses:\,
$y(t_0+h) - u(t_0+h) = O(h^{p+1})$,\,  with\, $p=\min(q,2r)$.
\end{theorem}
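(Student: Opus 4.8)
The plan is to mimic the proof of Theorem~\ref{fourierord}, but now comparing $y(t)$ with $u(t)$, the solution of the discretized problem \eqref{omegacprimemod}, rather than with $\omega(t)$. The triangle inequality is not really needed: I would directly estimate $y(t_0+h)-u(t_0+h)$ by the same variation-of-constants device, writing
\[
y(t_0+h)-u(t_0+h) = \int_{t_0}^{t_0+h} \frac{d}{d\tau} y(t_0+h,\tau,u(\tau))\dd\tau,
\]
and then expanding the integrand using \eqref{lem3} together with the ODE \eqref{omegacprimemod} satisfied by $u$. As in Theorem~\ref{fourierord}, the two ``$\partial/\partial\tau$'' and ``$\partial/\partial u$'' contributions combine into
\[
-h\int_0^1 \Phi(t_0+h,t_0+ch,u(t_0+ch))\bigl(f(u(t_0+ch))-u'(t_0+ch)\bigr)\dd c,
\]
and substituting $u'$ from \eqref{omegacprimemod} the term $f(u(t_0+ch))$ expanded in the Legendre basis cancels the truncated part, leaving two pieces: the ``tail'' $\sum_{j\ge r}\gamma_j(u)\P_j(c)$, exactly as before, plus a new quadrature-error piece $\sum_{j=0}^{r-1}\Delta_j(h)\P_j(c)$.

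Next I would estimate the two pieces separately. The tail piece is handled verbatim as in Theorem~\ref{fourierord}: integrating $\P_j$ against $\Phi$ gives an $O(h^j)$ factor by Lemma~\ref{lem1}, and $\gamma_j(u)=O(h^j)$ again by Lemma~\ref{lem1}, so this contributes $h\sum_{j\ge r}O(h^j)O(h^j)=O(h^{2r+1})$. For the new piece, I would again use Lemma~\ref{lem1} to get an $O(h^j)$ factor from $\int_0^1 \P_j(\tau)\Phi(\dots)\dd\tau$, while \eqref{errquad} gives $\Delta_j(h)=O(h^{q-j})$; multiplying and summing over $j=0,\dots,r-1$ yields $h\sum_{j=0}^{r-1}O(h^j)O(h^{q-j})=O(h^{q+1})$. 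Combining, $y(t_0+h)-u(t_0+h)=O(h^{2r+1})+O(h^{q+1})=O(h^{\min(2r,q)+1})=O(h^{p+1})$, as claimed.

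The only genuine subtlety, and the step I expect to require the most care, is justifying that the quadrature-error estimate \eqref{errquad} holds uniformly and that $u(t_0+\tau h)$ is a smooth enough function of $h$ (and stays in a fixed compact set as $h\to0$) for the remainder term of the quadrature to behave like $O(h^{q-j})$; this rests on $f$ being analytic and on $u$ depending smoothly on $h$, with $u(t_0+ch)=y_0+O(h)$, which follows from \eqref{solr}-type reasoning applied to \eqref{hbvmkr}. Once that well-posedness/regularity of the polynomial $u=u(h)$ is granted, everything else is a direct transcription of the argument already used for Theorem~\ref{fourierord}, with Lemma~\ref{lem1} doing all the work.
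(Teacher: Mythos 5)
Your proposal is correct and follows essentially the same route as the paper: the same variation-of-constants identity, the same splitting of $-f(u)+u'$ into the Legendre tail $\sum_{j\ge r}\gamma_j(u)\P_j$ plus the quadrature-error part $\sum_{j=0}^{r-1}\Delta_j(h)\P_j$, and the same use of Lemma~\ref{lem1} with \eqref{errquad} to get $O(h^{2r+1})+O(h^{q+1})$. The regularity/uniformity point you flag is indeed passed over silently in the paper's proof, so your remark is a welcome (but not divergent) addition.
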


\begin{proof} The proof is quite similar to that of
Theorem~\ref{fourierord}: by virtue of Lemma~\ref{lem1} and
(\ref{errquad})-(\ref{omegacprimemod}), one obtains
\begin{eqnarray*}\lefteqn{y(t_0+h)-u(t_0+h) ~=~ y(t_0+h,
t_0, y_0) - y(t_0+h, t_0+h, u(t_0+h))}\\
&=& \int_{t_0}^{t_0+h} \frac{d}{d\tau} y(t_0+h, \tau,
u(\tau))\dd\tau
~=~\int_{t_0}^{t_0+h} \left(\frac{\partial}{\partial
\tau}y(t_0+h,\tau,u(\tau))+\frac{\partial}{\partial
u}y(t_0+h,\tau,u(\tau))u'(\tau)\right) \dd \tau
\\
&=& h\int_0^1 \Phi(t_0+h,t_0+ch,u(t_0+ch))\left(
-f(u(t_0+ch))+u'(t_0+ch)\right)\dd c\\
&=& h\int_0^1 \Phi(t_0+h,t_0+ch,u(t_0+ch))\left( \sum_{j=0}^{r-1} \P_j(c)\Delta_j(h)-\sum_{j=
r}^\infty \gamma_j(u)\P_j(c)\right)\dd c\\
&=& h \sum_{j=0}^{r-1} \left( \int_0^1 \P_j(\tau)
\Phi(t_0+h,t_0+c h,u(t_0+c h)) \dd c
\right) \Delta_j(u)\\&&-\,h \sum_{j=r}^\infty \left( \int_0^1 \P_j(\tau)
\Phi(t_0+h,t_0+c h,u(t_0+c h)) \dd c
\right) \gamma_j(u)\\
&=& h\sum_{j=0}^{r-1} O(h^{j})\,O(h^{q-j}) \,-\,h\sum_{j=r}^\infty
O(h^{j})\,O(h^{j}) ~=~ O(h^{q+1})+O(h^{2r+1}).
\end{eqnarray*}\end{proof}

As an immediate consequence, one has the following result.

\begin{corol}\label{ord} Let $q$ be the order of the quadrature formula defined by the
abscissae $\{c_i\}$. Then, the order of the method (\ref{hbvmkr})
for approximating (\ref{ivp}), with $y_1=u(t_0+h)$, is
$p=\min(q,2r)$.
\end{corol}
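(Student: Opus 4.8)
The plan is to obtain the result as an immediate corollary of the preceding theorem by tracking how the local (one-step) error accumulates over $N=T/h$ steps. The key observation is that Corollary~\ref{ord} concerns the \emph{global} order of the one-step method $y_1 = u(t_0+h)$, whereas the theorem just proved gives the \emph{local} error $y(t_0+h)-u(t_0+h) = O(h^{p+1})$ with $p=\min(q,2r)$, where here $y(t)$ denotes the \emph{exact} solution of \eqref{localivp} on the first step. So the first step is to note that method \eqref{hbvmkr} is indeed a genuine one-step method: given $y_0$, the polynomial $u$ of degree at most $r$ is determined by the $k$ collocation-type conditions in \eqref{hbvmkr} (equivalently by the nonlinear system in the unknown coefficients $\gamma_j(u)-\Delta_j(h)$, $j=0,\dots,r-1$, as in the Remark), and then $y_1 := u(t_0+h)$ depends only on $y_0$ and $h$. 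Hence the standard convergence theory for one-step methods applies.

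Next I would invoke the classical Lady Windermere's fan / accumulation-of-errors argument: for a one-step method with local truncation error $O(h^{p+1})$ and a Lipschitz-continuous increment function, the global error at $t_N = t_0+Nh = t_0+T$ satisfies $\|y(t_N) - y_N\| = O(h^p)$, the loss of one power of $h$ coming from summing $N = T/h$ local contributions, with the propagation of past errors controlled by the Lipschitz constant via a discrete Gr\"onwall inequality. Here the Lipschitz continuity of the increment function $\Phi_h(y_0)$ follows from the analyticity (hence local Lipschitz continuity) of $f$, which has been assumed throughout, together with the fact that, for $h$ sufficiently small, the nonlinear system defining $u$ has a unique solution depending smoothly on $y_0$ (a standard implicit-function-theorem argument, the Jacobian being a small perturbation of the identity for small $h$). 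Combining this with the local estimate $y(t_0+h)-u(t_0+h) = O(h^{p+1})$ from the theorem yields global order exactly $p = \min(q,2r)$, which is the claim.

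The main obstacle—if one wants to be careful rather than merely cite the standard theory—is precisely the justification that the increment function of \eqref{hbvmkr} is well defined and Lipschitz for small $h$, uniformly on a neighbourhood of the exact trajectory; this is the usual technical point for implicit one-step methods and is handled exactly as for implicit Runge--Kutta schemes (indeed \eqref{hbvmkr} \emph{is} such a scheme, as the paper emphasizes). Everything else is routine: the local error bound is already in hand from the theorem, and the passage from local to global order via the telescoping/Gr\"onwall estimate is entirely standard. I would therefore keep the proof short, stating that \eqref{hbvmkr} defines a one-step method whose local error is $O(h^{p+1})$ by the preceding theorem, and that the classical result on the global order of one-step methods (see, e.g., \cite[Th.\,3.6 on p.\,160]{HNW}) then gives order $p=\min(q,2r)$.
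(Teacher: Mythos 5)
Your proposal is correct and follows essentially the same route as the paper, which presents the corollary as an immediate consequence of the preceding theorem: the theorem supplies the local error $y(t_0+h)-u(t_0+h)=O(h^{p+1})$ with $p=\min(q,2r)$, and the order of the one-step method $y_1=u(t_0+h)$ then follows by the standard local-to-global argument for (implicit) Runge--Kutta type one-step methods. Your added remarks on the well-posedness and Lipschitz continuity of the increment function merely spell out the routine details the paper leaves implicit.
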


Concerning the linear stability analysis, by considering that a
quadrature formula of order $q\ge 2r$ is exact when the integrand
is a polynomial of degree at most $2r-1$, the following result
immediately derives from Theorem~\ref{teostab}.

\begin{corol}\label{stabcor} Let $q$ be the order of the quadrature formula defined by the
abscissae $\{c_i\}$. If $q\ge 2r$, then the method (\ref{hbvmkr})
is perfectly $A$-stable.\footnote{I.e., its absolute stability region coincides with the left-half complex plane, $\CC^-$, \cite{BTbook}.}
\end{corol}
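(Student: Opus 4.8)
The plan is to reduce the claim to the linear stability analysis already carried out for the continuous (truncated Fourier) solution $\omega$ in Section~\ref{stab}, exploiting the fact that a quadrature formula of order $q\ge 2r$ integrates exactly all polynomials of degree at most $2r-1$. First I would apply the method (\ref{hbvmkr}) to the scalar test equation (\ref{test}), equivalently to the real planar system (\ref{testx}), so that $f(u)=Au$ with $A$ as defined before (\ref{testx}). Since $u$ is a polynomial of degree at most $r$, each integrand $\P_j(\tau)\nabla V(u(t_0+\tau h))=\P_j(\tau)u(t_0+\tau h)$ appearing in the energy balance has degree at most $j+r\le 2r-1$ for $j=0,\dots,r-1$; hence the quadrature sums $\sum_{\ell}b_\ell\P_j(c_\ell)f(u(t_0+c_\ell h))$ coincide \emph{exactly} with the corresponding integrals $\gamma_j(u)$. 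In other words, on the linear test problem the discrete polynomial $u$ satisfies (\ref{testr}) verbatim, i.e.\ $u\equiv\omega$.

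The second step is then immediate: the computation in Section~\ref{stab} applies word for word to $u$, yielding
\begin{equation*}
\Delta V(u(t_0)) ~=~ \aa\, h\sum_{j=0}^{r-1}\left\|\int_0^1 \P_j(\tau)\,u(t_0+\tau h)\dd\tau\right\|_2^2,
\end{equation*}
with the sum strictly positive whenever $u\not\equiv 0$ (the same argument as before: vanishing of all these moments would force $u(t_0+ch)=\rho\,\P_r(c)$, contradicting (\ref{testr})). Extending this to an arbitrary subinterval $[t_{i-1},t_i]$ and invoking the discrete Lyapunov theorem, one concludes exactly as in Theorem~\ref{teostab} that the sequence $u_i$ decays to zero when $\Re(\lambda)<0$ and stays bounded when $\Re(\lambda)=0$, for \emph{every} stepsize $h>0$. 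Since $\Re(\lambda)\le 0$ is precisely the condition $\Re(h\lambda)\le 0$ up to the positive scaling by $h$, this says the region of absolute stability of (\ref{hbvmkr}) contains all of $\CC^-$; and because on $\Re(\lambda)>0$ the same energy identity gives $\Delta V>0$, hence growth, the region is exactly $\CC^-$. This is the ``perfect $A$-stability'' asserted.

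The only delicate point — and the one I would state carefully rather than gloss over — is the exactness claim: it hinges on the degree count $\deg\bigl(\P_j\,\nabla V(u)\bigr)=j+\deg u\le (r-1)+r=2r-1<q$ for the linear test function, which is exactly why the hypothesis $q\ge 2r$ (and not merely $q\ge r$, as needed for consistency) is the right threshold. One should also note that this exactness is special to the \emph{linear} case: for nonlinear $f$ the integrands are no longer polynomials and $u\ne\omega$ in general, which is why this stability statement is a corollary of the test-equation analysis only. Given that exactness, no further estimates are required; the result follows from Theorem~\ref{teostab} with $\omega$ replaced by $u$, so the proof is essentially one paragraph once the reduction $u\equiv\omega$ on (\ref{test}) is observed.
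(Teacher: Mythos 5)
Your proof is correct and follows exactly the paper's route: the paper likewise observes that a quadrature of order $q\ge 2r$ is exact for the polynomial integrands of degree at most $2r-1$ arising from the test equation (so the discrete polynomial $u$ satisfies the truncated Fourier equation verbatim), and then invokes Theorem~\ref{teostab}. Your write-up merely makes explicit the degree count and the Lyapunov decrement that the paper leaves implicit.
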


In the case $r=1$, the above results apply to the methods in \cite{IP} (see
also \cite{IT3}).

\subsection{Runge-Kutta formulation}\label{rksec}
By setting, as usual, ~$u_i=u(t_0+c_ih)$, ~$f_i=f(u_i)$,
~$i=1,\dots,k$,~ (\ref{hbvmkr}) can be rewritten as
\begin{equation}\label{rk}
u_i=y_0+h\sum_{j=0}^{r-1} \int_0^{c_i}\P_j(\tau)\dd\tau
 \, \sum_{\ell=1}^k b_\ell \P_j(c_\ell) f_\ell, \qquad
i=1,\dots,k.
\end{equation} Moreover, since $q\ge r\ge \deg u$, one
has $y_1=u(t_0+h) \equiv y_0+h\sum_{\ell=1}^k b_\ell f_\ell$.
Consequently, the methods which Corollary~\ref{ord} refers to are
the subclass of Runge-Kutta methods with the following tableau:
\begin{equation}\label{class}
\begin{array}{c|c}
\begin{array}{c} c_1\\ \vdots \\ c_k\end{array} & A=(a_{ij}) \equiv
\left( b_j\sum_{\ell=0}^{r-1} \P_\ell(c_j)\int_0^{c_i}\P_\ell(\tau)\dd\tau \right)\\
\hline & \begin{array}{ccc} b_1
&\dots&b_k\end{array}\end{array}.
\end{equation} In particular, in
\cite{BIT3_1} it has been proved that when the nodes $\{c_i\}$
coincide with the $k$ Gauss points in the interval $[0,1]$, then $$A
= {\cal A} {\cal P} {\cal P}^T\Omega,$$ where ${\cal
A}\in\RR^{k\times k}$ is the matrix in the Butcher tableau of the
$k$-stages Gauss method, ${\cal P}=(\P_{j-1}(c_i))\in\RR^{k\times
r}$, and $\Omega=\diag(b_1,\dots,b_k)$. In such a way, when $k=r$,
one obtains the classical $r$-stages Gauss collocation method.
Consequently, (\ref{class}) can be regarded as a generalization of
the classical Runge-Kutta collocation methods, (\ref{hbvmkr}) being
interpreted as {\em extended collocation conditions}.

\subsection{Hamiltonian Boundary Value Methods (HBVMs)} \label{hbvmsec}

When considering a canonical Hamiltonian problem (\ref{ham}),  the
discretization of the integrals appearing in (\ref{localivpexpr})
by means of a Gaussian formula at $k$ nodes results in the
HBVM$(k,r)$ methods introduced in \cite{BIT2}.\footnote{A
different discretization, based at $k+1$ Lobatto abscissae, was
previously considered in \cite{BIT1}.} For such methods we derive,
in a novel way with respect to \cite{BIT0,BIT1,BIT2}, the
following result.

\begin{corol}
For all $k\ge r$, HBVM$(k,r)$ is perfectly $A$-stable and has
order $2r$. The method is energy conserving for all polynomial
Hamiltonians of degree not larger than $2k/r$.
\end{corol}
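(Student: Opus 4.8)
The plan is to reduce the two claims of the first sentence to the corollaries already proved for the discretized scheme~\eqref{hbvmkr}, and then to treat energy conservation by imitating, with the quadrature carried along, the argument of Subsection~\ref{inftyhbvms}.

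First I would recall that the $k$-point Gaussian quadrature formula on $[0,1]$ has order $q=2k$. Since $k\ge r$ gives $q=2k\ge 2r$, Corollary~\ref{ord} yields order $p=\min(q,2r)=2r$, and Corollary~\ref{stabcor} yields perfect $A$-stability; this settles the first sentence of the statement.

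For the energy-conservation claim I would reproduce the computation in the proof of the energy theorem of Subsection~\ref{inftyhbvms}, but now keeping track of the quadrature. Writing $u$ for the polynomial of degree $\le r$ defined by~\eqref{hbvmkr} and using~\eqref{localivpexprk} for $u'$, one has
\[
H(u(t_0+h))-H(y_0)=h\int_0^1\nabla H(u(t_0+\tau h))^T u'(t_0+\tau h)\dd\tau=h\sum_{j=0}^{r-1}\left(\int_0^1\P_j(\tau)\nabla H(u(t_0+\tau h))\dd\tau\right)^T\hat\gamma_j,
\]
with $\hat\gamma_j=\sum_{\ell=1}^k b_\ell\P_j(c_\ell)f(u(t_0+c_\ell h))$. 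The key observation is a degree count: if $H$ is a polynomial of degree $\nu\le 2k/r$, then, since $\deg u\le r$, the integrand $\P_j(\tau)\nabla H(u(t_0+\tau h))$ is a polynomial in $\tau$ of degree at most $(\nu-1)r+(r-1)=\nu r-1\le 2k-1$. Being exact on polynomials of degree $\le 2k-1$, the $k$-point Gauss formula then evaluates the remaining integral exactly,
\[
\int_0^1\P_j(\tau)\nabla H(u(t_0+\tau h))\dd\tau=\sum_{\ell=1}^k b_\ell\P_j(c_\ell)\nabla H(u(t_0+c_\ell h))=:G_j.
\]
Since $f(u)=J\nabla H(u)$ one also gets $\hat\gamma_j=JG_j$, so that
\[
H(u(t_0+h))-H(y_0)=h\sum_{j=0}^{r-1}G_j^TJG_j=0
\]
by skew-symmetry of $J$; iterating over the successive subintervals $[t_{i-1},t_i]$ then yields energy conservation throughout.

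The hard part will be the degree bookkeeping that certifies $(\nu-1)r+(r-1)\le 2k-1$, i.e.\ that the integrand lies in the exactness range of the $k$-point Gauss rule precisely when $\deg H\le 2k/r$; everything else is a transcription of earlier material, with the discrete vectors $G_j$ replacing the Fourier coefficients $\gamma_j(\omega)$ and the identity $G_j^TJG_j=0$ closing the argument exactly as in Subsection~\ref{inftyhbvms}.
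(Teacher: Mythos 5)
Your proposal is correct and follows essentially the same route as the paper: order and perfect $A$-stability from Corollaries~\ref{ord} and \ref{stabcor} (using $q=2k$ for the $k$-point Gauss rule), and energy conservation by the line-integral computation reduced to the exactness condition (\ref{exint}), with the same degree count $\nu r-1\le 2k-1$ giving $\nu\le 2k/r$. Your explicit identification $\hat\gamma_j=JG_j$ and $G_j^TJG_j=0$ is just a slightly more detailed transcription of the paper's argument.
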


\begin{proof} The result on the order and linear stability easily follow
from Corollaries~\ref{ord} and \ref{stabcor}, respectively.
Concerning energy conservation, one has
\begin{eqnarray*}
\lefteqn{H(u(t_0+h)) -H(y_0) ~=~
 h\int_0^1 \nabla H(u(t_0+\tau h))^T u'(t_0+\tau h) \dd\tau}\\
&=& h\int_0^1 \nabla H(u(t_0+\tau h))^T \sum_{j=0}^{r-1}
\P_j(\tau)
\sum_{\ell=1}^k b_\ell \P_j(c_\ell) J\nabla H(u(t_0+c_\ell h)) \dd\tau\\
&=& h\sum_{j=0}^{r-1} \left[\int_0^1 \P_j(\tau) \nabla H(u(t_0+\tau
h))\dd\tau \right]^T J \left[\sum_{\ell=1}^k b_\ell \P_j(c_\ell)
\nabla H(u(t_0+c_\ell h))\right] ~=~ 0,
\end{eqnarray*}
provided that
\begin{equation}\label{exint}\int_0^1 \P_j(\tau)
\nabla H(u(t_0+\tau h))\dd\tau = \sum_{\ell=1}^k b_\ell \P_j(c_\ell)
\nabla H(u(t_0+c_\ell h)).
\end{equation}
In the case where $H$ is a
polynomial of degree $\nu$, this is true provided that the integrand
is a polynomial of degree at most $2k-1$. Consequently,~ $\nu r-1
\le 2k-1$,~ i.e.,~ $\nu\le 2k/r$.
\end{proof}

 In the case of general Hamiltonian problems, if we consider the limit
as $k\rightarrow\infty$ we recover formulae (\ref{localivpexpr}),
which have been called {\em HBVM$(\infty,r)$} (or, more in general,
{\em $\infty$-HBVMs}) \cite{BIT0,BIT2}: by the way, (\ref{solr}) is
nothing but the {\em Master Functional Equation} in
\cite{BIT0,BIT2}. In the particular case $r=1$, we derive the {\em
averaged vector field} in \cite{QMcL} (for a related approach see
\cite{H}).

\begin{remark} We observe that, in the case of polynomial Hamiltonian
systems, if (\ref{exint}) holds true for $k=k^*$, then
$${\rm
HBVM}(k,r) \equiv {\rm HBVM}(k^*,r) \equiv {\rm HBVM}(\infty,r),
\qquad \forall k\ge k^*.$$
That is, (\ref{localivpexprk}) coincides
with (\ref{localivpexpr}), for all $k\ge k^*$. In the non polynomial
case,  the previous conclusions continue ``practically'' to hold, provided that the
integrals are approximated within machine precision.\end{remark}

\begin{remark} As is easily deduced from the arguments in
Section~\ref{rksec}, the HBVM$(r,r)$ method is nothing but the
$r$-stages Gauss method of order $2r$ (see also \cite{BIT2}).\end{remark}

\section{Numerical Tests}\label{numtest} We here provide a few numerical tests,
showing the effectiveness of HBVMs, namely of the methods obtained
in the new framework, when the problem (\ref{ivp}) is in the form
(\ref{ham}). In particular, we consider the Kepler problem, whose
Hamiltonian is (see, e.g., \cite[p.\,9]{HLW}):\\
\centerline{$H\left([q_1,\,q_2,\,p_1,\,p_2]^T\right) =
\tfrac{1}{2}\left(
p_1^2+p_2^2\right)-\left(q_1^2+q_2^2\right)^{-\frac{1}2}.$}
 When started at\\
\centerline{$\left(~ 1-e,\quad 0,\quad 0,\quad \sqrt{(1+e)/(1-e)}
~\right)^T, \qquad e\in[0,1),$} it has an elliptic periodic orbit
of period $2\pi$ and eccentricity $e$.  When $e$ is close to 0,
the problem is efficiently solved by using a constant stepsize.
However, it becomes more and more difficult as $e \rightarrow 1$,
so that a variable-step integration would be more appropriate in
this case. We now compare the following 6-th order methods for
solving such problem over a 1000 periods interval:
\begin{itemize}
 \item HBVM(3,3), i.e., the GAUSS6 method, which is a symmetric and symplectic
method;

 \item HBVM(4,3), which is symmetric \cite{BIT1} but not symplectic nor energy
preserving, since the Gauss quadrature formula of order 8 is not enough
accurate, for this problem;

 \item HBVM(15,3), which is {\em practically} energy preserving, since the
Gauss formula of order 30 is accurate to machine precision, for this
problem.
\end{itemize}
In the two plots in Figure~\ref{cost_hy} we see the obtained
results when $e=0.6$ and a constant stepsize is used: as one can
see, the Hamiltonian is approximately conserved for the GAUSS6 and
HBVM(4,3) methods, and (practically) exactly conserved for the
HBVM(15,3) method. Moreover, all methods exhibit the same order
(i.e., 6), with the error constant of the HBVM(4,3) and HBVM(15,3)
methods much smaller than that of the symplectic GAUSS6 method.
\begin{figure}
\centerline{\includegraphics[width=7cm,height=6cm]{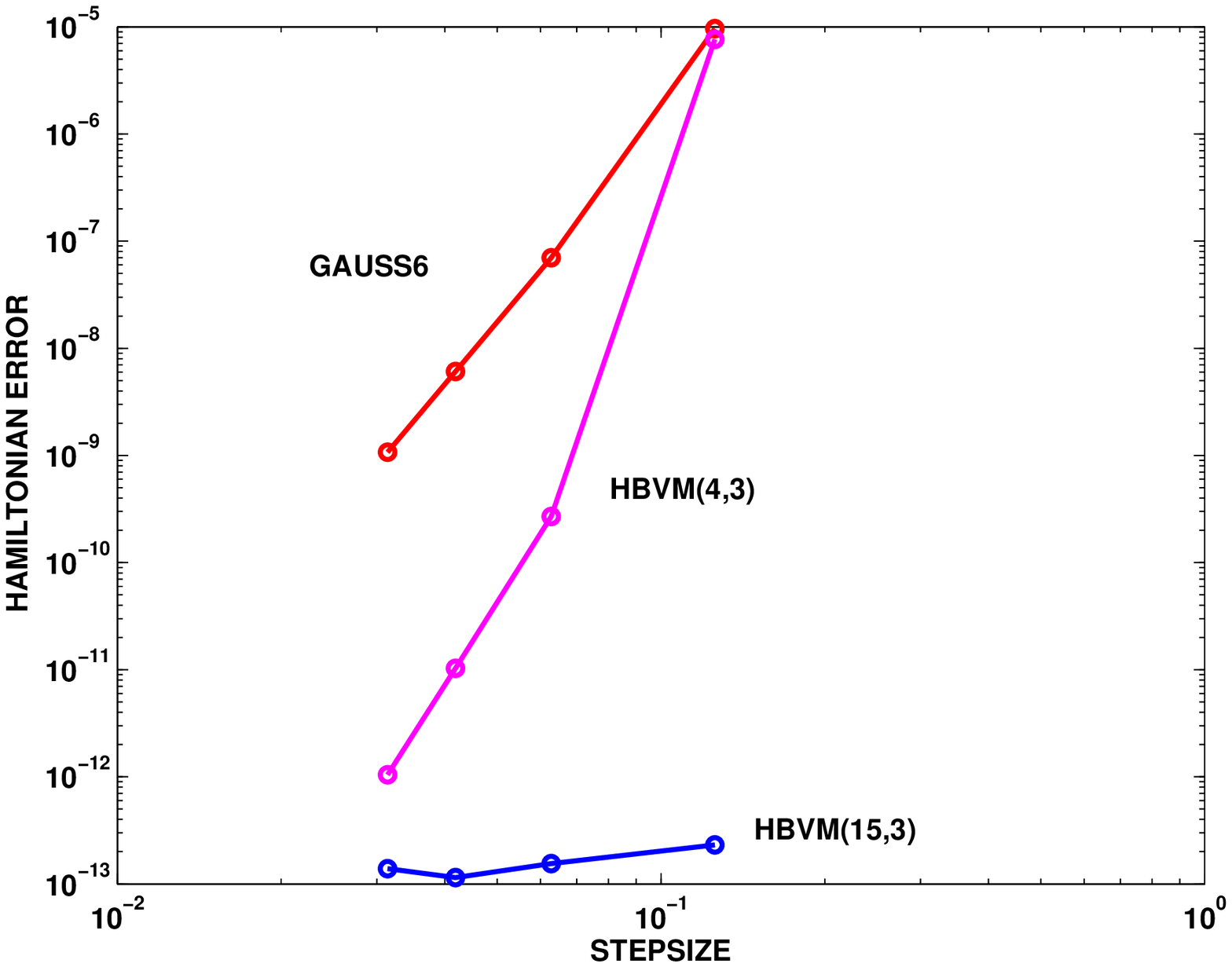}\qquad
\includegraphics[width=7cm,height=6cm]{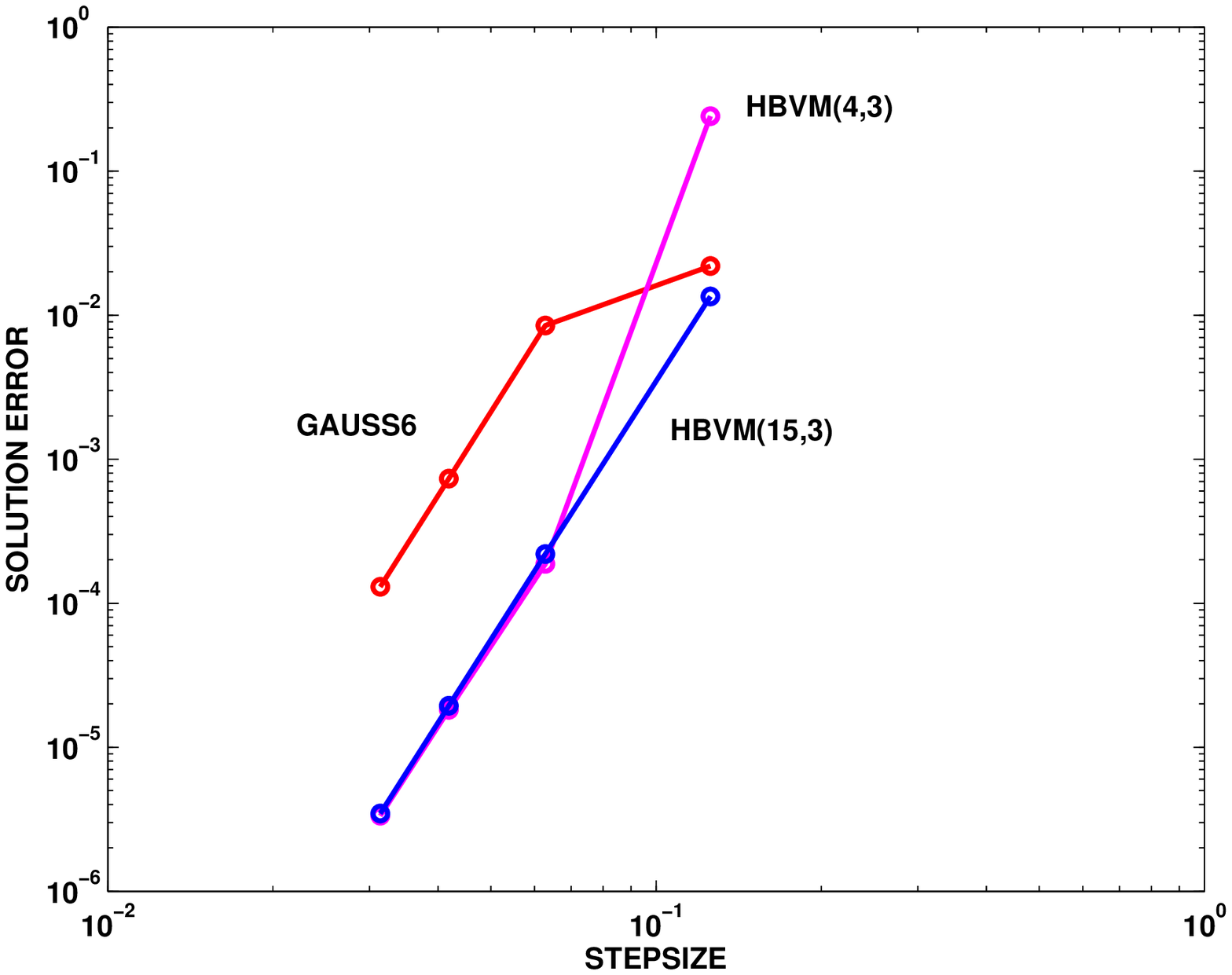}}
\caption{Kepler problem, $e=0.6$, Hamiltonian (left plot) and
solution (right plot) errors over 1000 periods with a constant
stepsize.} \label{cost_hy} \smallskip
\centerline{\includegraphics[width=7cm,height=6cm]{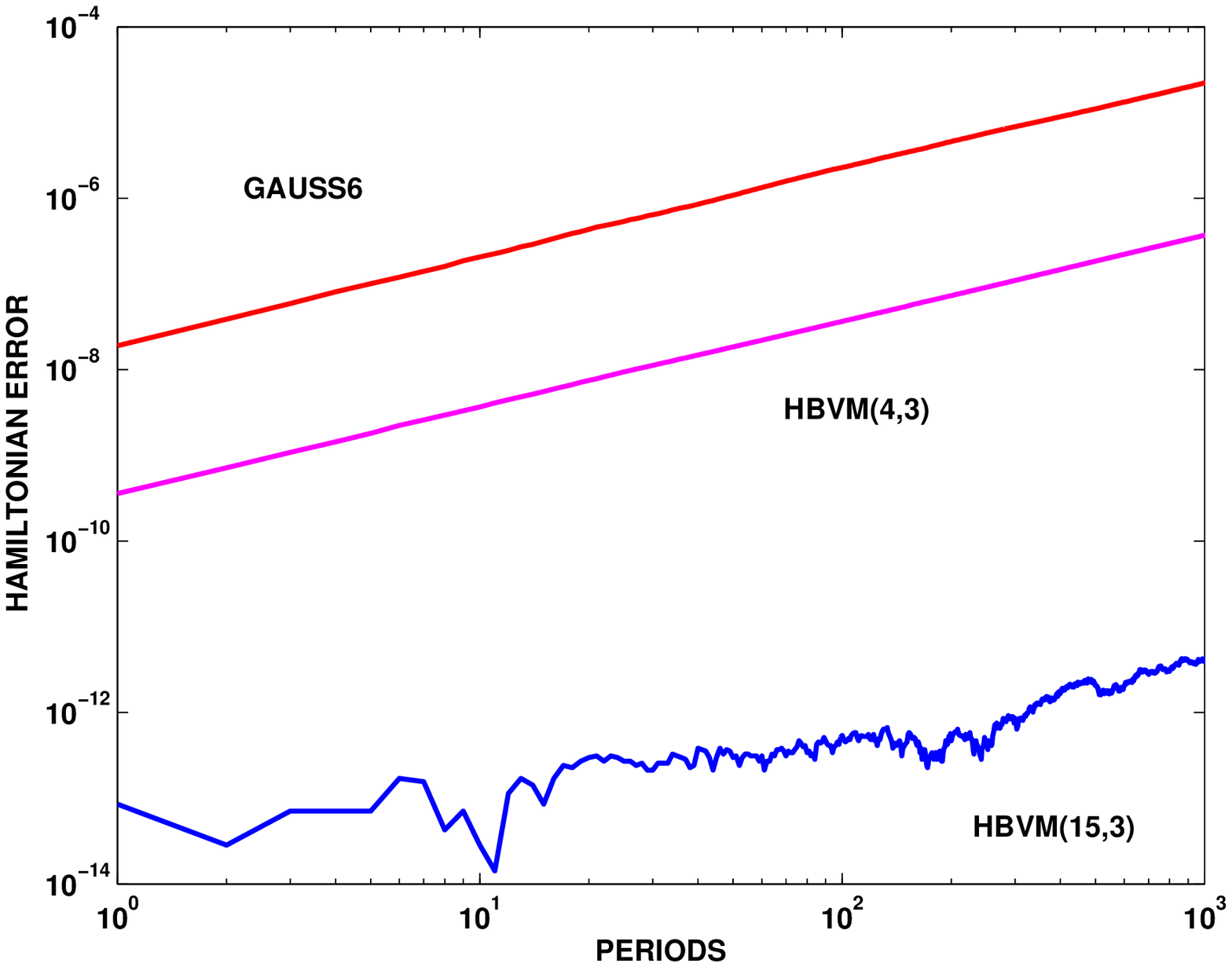}
\qquad
\includegraphics [width=7cm,height=6cm]{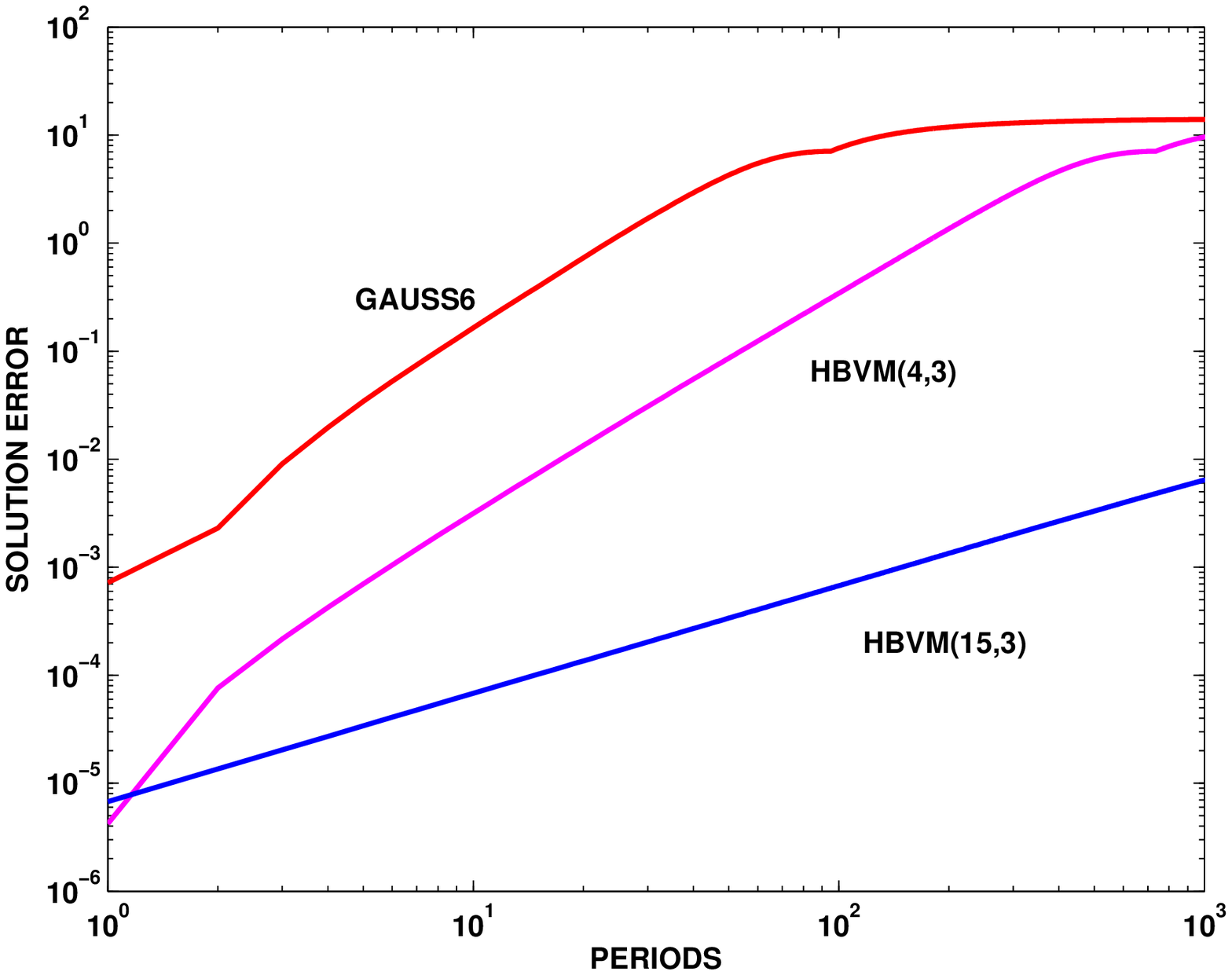}}
\caption{Kepler problem, $e=0.99$, Hamiltonian (left plot) and
solution (right plot) errors over 1000 periods with a variable
stepsize, $Tol = 10^{-10}$. Note that HBVM(4,3) is not energy
preserving, on the contrary of HBVM(15,3).} \label{var_hy}
\end{figure}
On the other hand, when $e=0.99$, we consider a variable stepsize
implementation with the following {\em standard} mesh-selection
strategy:
$$h_{new} = 0.7 \cdot h_n\left( \frac{Tol}{err_n}
\right)^{1/(p+1)},
$$
where $p=6$ is the order of the method, $Tol$
is the used tolerance, $h_n$ is the current stepsize, and $err_n$ is
an estimate of the local error. According to what stated in the
literature, see, e.g., \cite[pp.\,125--127]{SC},
\cite[p.\,235]{LeRe}, \cite[pp.\,303--305]{HLW}, this is not an
advisable choice for symplectic methods, for which a {\em drift} in
the Hamiltonian appears, and a {\em quadratic} error growth is
experienced, as is confirmed by the plots in Figure~\ref{var_hy}.
The same happens for the method HBVM(4,3), which is not energy
preserving. Conversely, for the (practically) energy preserving
method HBVM(15,3), {\em no drift} in the Hamiltonian occurs and a
{\em linear} error growth is observed.

\begin{remark} We observe that more refined (though more involved\,) mesh
selection strategies exist for symplectic methods  aimed to avoid
the numerical drift in the Hamiltonian and the quadratic error
growth (see, e.g., \cite[Chapter VIII]{HLW}). However, we want to
emphasize that they are no more necessary, when using energy
preserving methods, since obviously no drift can occur, in such a
case.\end{remark}

\section{Conclusions}\label{final}

In this paper, we have presented a general framework for the
derivation and analysis of effective one-step methods, which is
based on a local Fourier expansion of the problem at hand. In
particular, when the chosen basis is a polynomial one, we obtain a
large subclass of Runge-Kutta methods, which can be regarded as a
generalization of Gauss collocation methods.

When dealing with canonical Hamiltonian problems, the methods
coincide with the recently introduced class of energy preserving
methods named HBVMs. A few numerical tests seem to show that such
methods are less sensitive to a wider class of perturbations, with
respect to symplectic, or symmetric but non energy conserving,
methods. As matter of fact, on the contrary of the latter methods,
they can be conveniently associated with standard mesh selection
strategies.

\end{document}